
\documentclass[12pt]{amsart}
\usepackage{amsmath}
\usepackage{enumerate}
\usepackage{amssymb}
\usepackage{amsbsy}
\usepackage{amsfonts}
\textwidth 5.5truein
\usepackage[arrow]{xy}
\usepackage{comment}

\textheight 9truein \hoffset -.87in \voffset -.56in \footskip 36pt
\textwidth 16.5cm

\theoremstyle{plain}
\newtheorem{thm}{Theorem}[section]
\newtheorem{prop}[thm]{Proposition}
\newtheorem{lem}[thm]{Lemma}
\newtheorem{cor}[thm]{Corollary}
\theoremstyle{definition}

\numberwithin{equation}{section}
\newcommand{\sm}{\left(\begin{smallmatrix}}
\newcommand{\esm}{\end{smallmatrix}\right)}

\newfont{\FieldFont}{msbm10 scaled\magstep1}

\def\SL{\mathrm{SL}_2(\mathbb Z)}

\usepackage{color}

\definecolor{blue}{rgb}{0,0,1}
\definecolor{red}{rgb}{1,0,0}
\definecolor{green}{rgb}{0,.6,.2}
\definecolor{purple}{rgb}{1,0,1}

\long\def\red#1\endred{{\color{red}#1}}
\long\def\blue#1\endblue{{\color{blue}#1}}
\long\def\purple#1\endpurple{{\color{purple}#1}}
\long\def\green#1\endgreen{{\color{green}#1}}

\makeatletter
\newcommand\matc[4]{\left( {#1\@@atop #3}{#2\@@atop #4}\right)}
\newcommand\matr[4]{\left( {\hfill #1\@@atop\hfill #3}{\hfill
#2\@@atop\hfill #4}\right)}
\makeatother

\begin{document}

\title[On Rankin-Cohen Brackets of Hecke Eigenforms]{On Rankin-Cohen Brackets of Hecke Eigenforms and Modular Forms of Half-Integral Weight}
\author{YoungJu Choie}
\address{Department of Mathematics, Pohang University of Science and Technology, Pohang, Republic of Korea}\email{yjc@postech.ac.kr}\author{Winfried Kohnen}\address{Mathematisches Institut der Universität INF 205, D-69120, Heidelberg, Germany} \email{winfried@mathi.uni-heidelberg.de} \author{Yichao Zhang}\address{MOE-LCSM, School of Mathematics and Statistics, Hunan Normal University, Changsha, Hunan 410081, P. R. China}\email{yichao.zhang@hunnu.edu.cn}

\thanks{The first author was partially supported by   NRF 2022R1A2B5B0100187113, BSRI-NRF 2021R1A6A1A10042944, and third author was partially supported by NSFC 12271123.}
 
\subjclass[2000]{11F11, 11F37, 11F25}
\keywords{Hecke eigenform, Rankin-Cohen bracket, first Shimura lift, Poincar\'e series}

\date{}
\maketitle

\begin{abstract}
We generalize the linear relation formula between the square of normalized Hecke eigenforms of weight $k$ and normalized Hecke eigenforms of weight $2k$, to Rankin-Cohen brackets of general degree. As an ingredient of the proof, we also generalize a formula of Zagier on the Petersson inner product of Rankin-Cohen brackets involving Eisenstein series.
\end{abstract}


\section{Introduction}

Let $S_k$ be the space of cusp forms of weight $k$ and level $1.$  In \cite{K}, it was given an explicit description, in terms of special values of certain shifted  $L$-functions and Fourier coefficients of half-integral weight modular forms, of the subspace of $S_{2k}$  generated
by the squares $f(\tau)^2$ of normalized Hecke eigenforms $f(\tau)$  in $S_k.$  The main ingredients in the proof except for the classical Petersson formulas
for Fourier coefficients was a well-known identity relating the first
Shimura lift $\mathcal S_1$ of the product $f(4\tau)\theta(\tau) $ to $  f(\tau)^2,$ where
$\theta(\tau) $ is the standard theta function of weight $\frac{1}{2}$ and level $4$. 
\medskip

The purpose of the present paper is to generalize the above result to
the case where the square $f(\tau)^2$ is replaced
by the Rankin-Cohen bracket $ [f(\tau),f(\tau)]_\nu (\nu\geq 0).$   Besides
the Petersson formulas again, the main
tool in the proof is a generalization due to Popa \cite{P} of the above-mentioned
result referring to $\mathcal S_1 $ as well as a generalization of a formula due to
Zagier \cite{Z}, which expresses the Petersson scalar product of the Rankin-Cohen bracket
against an Eisenstein series in terms of the special value of an $L$-function, to the case where the Eisenstein series is replaced by a Poincar\'e series.

\section{Preliminaries and Results}

Denote the two standard generators of $\Gamma(1)=\SL$ by
\[S=\begin{pmatrix}
0&-1\\
1&0
\end{pmatrix},\quad T=\begin{pmatrix}
1&1\\
0&1
\end{pmatrix}.\]
For a congruence subgroup $\Gamma$ of $\Gamma(1)$, we assume that $-I\in\Gamma$ throughout. Let $\Gamma_\infty$ denote the subgroup of unipotent elements in $\Gamma$ and $w$ the width of the cusp $\infty$ for $\Gamma$, so $\Gamma_\infty$ is generated by $T^w$.

Let $\mathrm{GL}_2^+(\mathbb R)$ be the set of matrices in $\mathrm{GL}_2(\mathbb R)$ of positive determinant. For a real number $k$, the slash-$k$ operator of $\gamma\in \mathrm{GL}_2^+(\mathbb R)$ on functions on the upper-half plane $\mathbb H$ is given by
\[f|_k\gamma(\tau)=\det(\gamma)^{\frac{k}{2}}j(\gamma,\tau)^{-k}f(\gamma\tau),\quad \tau\in\mathbb H.\]
Here $j(\gamma,\tau)=c\tau+d$ for $\gamma=\begin{pmatrix}
a&b\\
c&d
\end{pmatrix}$ and $z^s=e^{s\log z}$ for $z\in\mathbb C^\times$, $s\in\mathbb C$ with $\log z=\log |z|+i\arg(z)$ taking the principal branch (that is, $\arg(z)\in(-\pi ,\pi]$). 

For a multiplier system $v$ of weight $k$ on $\Gamma$, a modular form of weight $k$ and multiplier system $v$ for $\Gamma$ is a holomorphic function $f$ on $\mathbb H$ such that $f|_k\gamma=v(\gamma)f$ for $\gamma\in\Gamma$ and $f$ is holomorphic at cusps. If in addition $f$ vanishes at cusps, $f$ is called a cusp form. The spaces of modular forms and cusp forms are denoted by $M_k(\Gamma,v)$ and $S_k(\Gamma,v)$ respectively. (See Chapter 3 and 4 of \cite{R} for details on multiplier systems and modular forms.) We employ the normalized Petersson inner product on the space $S_{k}(\Gamma,v)$ of cusp forms of weight $k$ and multiplier system $v$ on $\Gamma$ throughout:
\[\langle f,g\rangle=\frac{1}{[\Gamma(1)\colon \Gamma]}\int_{\Gamma\backslash\mathbb H}f(\tau)\overline{g(\tau)}y^k\frac{dxdy}{y^2}.\]
In particular, the inner product is independent of the choice of the group $\Gamma$. 

Let $k>2$ and $v(T^w)=e^{2\pi i\delta}$ with $\delta\in [0,1)$ and $m\in \mathbb Z+\delta$ be a positive real number. Define the $m$-th Poincar\'e series of weight $k$ with multiplier system for $\Gamma$ by
\[P_{k,m,v}(\tau)=\frac{1}{2}\sum_{\gamma\in\Gamma_\infty\backslash\Gamma}v(\gamma)^{-1}(c\tau+d)^{-k}e^{2\pi im\gamma\tau/w}=\frac{1}{2}\sum_{\gamma\in\Gamma_\infty\backslash\Gamma}e^{2\pi im\tau/w}|_{k,v}\gamma(\tau).\]
Hereafter the lower row of $\gamma$ is typically denoted by $(c,d)$. By the choice of $m$, the function $v(\gamma)^{-1}(c\tau+d)^{-k}e^{2\pi im\gamma\tau/w}$ is invariant under left multiplication by $T^w$ on $\gamma$. Moreover, since $k>2$, the series is absolutely convergent and vanishes at $\infty$ and other cusps and it belongs to  $S_{k}(\Gamma,v)$. As usual, by the folding-unfolding trick, with respect to the normalized Petersson inner product
\[\langle h, P_{k,m,v}\rangle=\frac{1}{[\Gamma(1):\Gamma]}\frac{\Gamma(k-1)w^{k}}{(4\pi m)^{k-1}}c(m),\quad h=\sum_nc(n)e^{2\pi in\tau/w}\in S_{k}(\Gamma,v).\]
We denote by $P_{k,m}$ the Poincar\'e series of weight $k$ and degree $m$ for $\Gamma=\SL$ and by $P_{k,m,4}$ that for $\Gamma=\Gamma_0(4)$ with the implicit multiplier system $\gamma\mapsto\theta(\gamma\tau)/\theta(\tau)$.

For each $\nu\in \mathbb{Z}_{\geq 0}$ and modular forms $f_i\in M_{k_i}(\Gamma,v_i)$, $i=1,2$, the Rankin-Cohen brackets are defined as
$$[f_1, f_2]_{\nu}=
(2\pi i )^{-\nu}\sum_{i=0}^{\nu}(-1)^{\nu-i}\sm \nu\\ i\esm 
\frac{\Gamma(k_1+\nu)\Gamma(k_2+\nu)}{\Gamma(k_1+i)\Gamma(k_2+\nu-i)}f_1^{(i)} f_2^{(\nu-i)}.$$
Then $[f_1,f_2]_\nu\in M_{k_1+k_2+2\nu}(\Gamma,v_1v_2)$ (See Theorem 7.1 of \cite{C}).

When $k\in\mathbb Z$ and $v$ is trivial, we drop $v$ from the notation. Let $\theta(\tau)=\sum_{n\in\mathbb Z}q^{n^2}$, $q=e^{2\pi i\tau}$, be Jacobi's theta function. When $k\in\frac{1}{2}+\mathbb Z$ and $\Gamma=\Gamma_0(4)$, the multiplier system $v$ is fixed as $v(\tau)=\theta(\gamma\tau)/\theta(\tau)$ and we shall drop the multiplier system and also the group  from the notation and write simply $M_k$ and $S_k$. In this case, we have the operators $U_4$ and $W_4$ as follows
\begin{align*}
f|U_4&=\frac{1}{4}\sum_{j\mod 4}f(\frac{\tau+j}{4}),\\
f|_kW_4&=(-2i\tau)^{-k}f(-\frac{1}{4\tau}).
\end{align*}
Explicitly, if $f=\sum_{n=0}^\infty c(n)q^n$, then $f|U_4=\sum_{n=0}^\infty c(4n)q^n$. If $c(n)=0$ whenever $(-1)^{k-\frac{1}{2}}n\equiv 2,3\mod 4$, $f$ is called to satisfy Kohnen's plus condition and the space of such cusp forms is denoted by $S_k^{+}$. Then $f\in S_k^+$ if and only if 
\[f|U_4=(-1)^\frac{2k-1}{4}2^kf|_kW_4.\]

\medskip

Let $k$ be a positive even integer and $\nu$ be a nonnegative integer. Let $\{f_1, \cdots, f_d\}$ (resp.  $\{F_1, \cdots, F_e\}$ ) be the orthogonal basis of normalized Hecke eigenforms of weight $k$ (resp. weight $2k+4\nu$) for $S_k$ (resp. $S_{2k+4\nu}$).  Let $\{g_1, \cdots, g_e \}$ be an orthogonal basis of Hecke eigenforms of 
$ S^+ _{k+2\nu+\frac{1}{2}} $ with $g_{\mu}=\sum_{n\geq 1}c_{\mu}(n)q^n$ corresponding to $F_{\mu}$ for all $\mu$ so that $g_{\mu}$ and $F_{\mu}$ have the same Hecke eigenvalues, so they correspond under Shimura's correspondence. Recall also that the first Shimura map
$$\mathcal{S}_1 : S^+_{k+2\nu+\frac{1}{2}}\rightarrow S_{2k+4\nu}$$ is given by
$$\sum_{n\geq 1} c(n)q^n \rightarrow \sum_{n\geq 1}\big(\sum_{d|n}d^{k-1}c(\frac{n^2}{d^2})\big)q^n.$$ 
and commutes with all Hecke operators.  One has
$\mathcal{S}_1(g_{\mu})=c_{\mu}(1)F_{\mu}$.

\medskip

For even integral weight $k$, we first extend the identity 
\[\mathcal{S}_1(\theta(\tau)f(4\tau))=f^2(4\tau)\]
for normalized Hecke eigenforms $f\in S_k$ to higher Rankin-Cohen brackets. For Eisenstein series, the next proposition was proved in \cite{KZ} when $\nu=0$ and in Prop B.1 of \cite{P} for general $\nu$. Note that the formula in \cite{P} has a wrong scalar. 

\begin{prop} For any even integral weight $k$, any normalized Hecke eigenform $f \in S_k$, and any nonnegative integer $\nu$,
$$\frac{(k+2\nu-1)!}{(k+\nu-1)!}\mathcal{S}_1\big([\theta(\tau),f(4\tau) ]_{\nu}\big)=[f(\tau), f(\tau)]_{2\nu}$$
\end{prop}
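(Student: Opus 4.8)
The plan is to prove the identity in two stages: first establish that the two sides are proportional as elements of $S_{2k+4\nu}$, and then pin down the constant of proportionality by inspecting a single Fourier coefficient. Both sides do lie in $S_{2k+4\nu}$: the bracket $[\theta,f(4\tau)]_\nu$ has weight $\frac12+k+2\nu$, is cuspidal (since $f(4\tau)$ and all its derivatives vanish at the cusps while $\theta$ stays bounded) and lies in the Kohnen plus space $S^+_{k+2\nu+\frac12}$, so that $\mathcal{S}_1$ applies and lands in weight $2k+4\nu$; and $[f,f]_{2\nu}$ has weight $2k+4\nu$ directly.

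For the proportionality I would follow Popa's argument (Prop.~B.1 of \cite{P}). Writing $f=\sum_{m\ge1}a(m)q^m$ with $a(1)=1$ and expanding $[\theta,f(4\tau)]_\nu=\sum_n c(n)q^n$, one finds $c(n)=\sum_{r^2+4t=n,\,t\ge1}P_\nu(r^2,4t)\,a(t)$, where $P_\nu$ is the polynomial read off from the bracket (with $\theta$ contributing weight $\frac12$ and $f(4\tau)$ weight $k$). Applying the divisor sum defining $\mathcal{S}_1$, comparing with the $n$-th coefficient $\sum_j Q_{2\nu}(j,n-j)a(j)a(n-j)$ of $[f,f]_{2\nu}$, and then inserting Hecke multiplicativity $a(j)a(n-j)=\sum_{e\mid\gcd(j,n-j)}e^{k-1}a\!\left(j(n-j)/e^2\right)$, the whole comparison collapses to an identity between the bracket polynomials $P_\nu$ and $Q_{2\nu}$ that no longer involves $f$. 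This is a Gegenbauer-type (terminating hypergeometric) identity; once it is in hand, the \emph{same} scalar $C_{k,\nu}$ relates the two sides in every Fourier coefficient, i.e.\ $\mathcal{S}_1([\theta,f(4\tau)]_\nu)=C_{k,\nu}[f,f]_{2\nu}$.

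Granting proportionality, I would extract $C_{k,\nu}$ from the coefficient of $q^2$, the smallest nonzero one on both sides. On the lift side the only surviving contribution comes from $d=1$, $r=0$, $t=1$ (the competing term involves $c(1)$, which vanishes since $r^2+4t=1$ has no solution with $t\ge1$), so the coefficient of $q^2$ equals $c(4)=P_\nu(0,4)=(-1)^\nu\frac{\Gamma(\frac12+\nu)}{\Gamma(\frac12)}4^\nu=(-1)^\nu\frac{(2\nu)!}{\nu!}$; crucially this is independent of the divisor-sum exponent appearing in $\mathcal{S}_1$. The coefficient of $q^2$ in $[f,f]_{2\nu}$ is $Q_{2\nu}(1,1)$, so the asserted normalization is equivalent to the closed-form evaluation
\[
Q_{2\nu}(1,1)=\sum_{i=0}^{2\nu}(-1)^i\binom{2\nu}{i}\frac{\Gamma(k+2\nu)^2}{\Gamma(k+i)\,\Gamma(k+2\nu-i)}=\frac{(k+2\nu-1)!}{(k+\nu-1)!}\,(-1)^\nu\frac{(2\nu)!}{\nu!},
\]
a standard terminating hypergeometric summation of Vandermonde/Pfaff--Saalschütz type (one checks that $\nu=0$ gives $1$ and $\nu=1$ gives $-2(k+1)$, matching the right-hand side).

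The main obstacle is precisely the constant, as the remark preceding the statement warns that \cite{P} carries the wrong scalar. Thus the real work is the careful bookkeeping of the three normalizations — in the Rankin--Cohen bracket, in $\theta$, and in $\mathcal{S}_1$ — together with the evaluation of $Q_{2\nu}(1,1)$ above; I expect the polynomial identity underlying proportionality to be routine but notationally heavy, while it is the Gamma-factor constant that demands care. An alternative to invoking \cite{P} would be to run the Fourier-coefficient comparison from scratch, in which case establishing the full polynomial identity (not merely its value at $(1,1)$) becomes the technical heart of the argument.
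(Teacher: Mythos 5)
Your proposal is correct, and its logical organization genuinely differs from the paper's. The paper gives a single self-contained Fourier-coefficient computation: it expands $[\theta(\tau),f(4\tau)]_\nu$, applies $\mathcal S_1$, evaluates the $x^{2\nu}$-coefficient of $\left(\frac{n^2-r^2d^2}{4d^2}x^2+rx-1\right)^{k+2\nu-1}$ in two ways via the factorization $\left(\frac{n+rd}{2d}x-1\right)^{k+2\nu-1}\left(\frac{n-rd}{2d}x+1\right)^{k+2\nu-1}$, changes variables to $n_1=\frac{n+dr}{2}$, $n_2=\frac{n-dr}{2}$, and finishes with the Hecke relation; this proves, with explicit constants, exactly the ``Gegenbauer-type'' polynomial identity that you leave as a black box. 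You instead invoke \cite{P} only for proportionality and recover the scalar from the $q^2$-coefficient. That split is sound, because a wrong scalar in \cite{P} cannot affect proportionality and the constant there depends only on $k,\nu$, not on $f$; and everything you actually compute is right: $c(1)=0$ forces the $q^2$-coefficient of the lift to equal $c(4)=(-1)^\nu(2\nu)!/\nu!$ independently of the divisor-sum exponent (a genuine subtlety, since the paper's displayed definition of $\mathcal S_1$ carries the exponent $k-1$ while its proof uses $k+2\nu-1$), and your evaluation $Q_{2\nu}(1,1)=(-1)^\nu\frac{(2\nu)!}{\nu!}\cdot\frac{(k+2\nu-1)!}{(k+\nu-1)!}$ is correct --- it equals $(2\nu)!$ times the $x^{2\nu}$-coefficient of $(1-x)^{k+2\nu-1}(1+x)^{k+2\nu-1}=(1-x^2)^{k+2\nu-1}$, which is precisely the paper's two-way coefficient evaluation specialized to $n=2$, $d=1$, $r=0$. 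What your route buys is robustness exactly where \cite{P} is known to fail: the constant is pinned down by a one-line Vandermonde evaluation rather than by constant bookkeeping through the entire computation. What it costs is self-containedness: the proportionality (that a single $f$-independent scalar relates \emph{all} Fourier coefficients) still rests on Popa's argument being structurally correct, and if you discharge that by your ``from scratch'' alternative, you must establish the full polynomial identity rather than its value at $(1,1)$ --- at which point you have reproduced the paper's proof.
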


Next we generalize a formula of Zagier in \cite{Z} to Poincar\'e series and relax the assumption on multiplier systems. Let $\nu$ be a nonnegative integer, $k_1,k_2\in\mathbb{R}$ with $k_2>2$, $v_1,v_2$ multiplier systems for $\Gamma$, $v=v_1v_2$.
Assume
\begin{align*}
    g=\sum_{n\geq 0}b(n)e^{2\pi in\tau/w}&\in M_{k_1}(\Gamma,v_1)\\
    f=\sum_{n>0}a(n)e^{2\pi in\tau/w}&\in S_{k_1+k_2+2\nu}(\Gamma,v).
\end{align*}
Assume $v_i(T^w)=e^{2\pi i\delta_i}$ with $\delta_i\in[0,1)$. Then $b(n)=0$ if $n\not\in \mathbb Z+\delta_1$, and $a(n)=0$ if $n\not\in \mathbb Z+\delta_1+\delta_2$. Let $m$ be a positive real number in $\mathbb Z+\delta_2$.


\begin{prop} Let $\nu$ be a nonnegative integer and $m$ a positive real number in $\mathbb Z+\delta_2$.We have
\begin{align*}
&\left\langle f,[g,P_{m,k_2,v_2}]_\nu\right\rangle=\frac{w^{k_1+k_2+\nu}}{[\Gamma(1):\Gamma]}\frac{\Gamma(k_1+k_2+2\nu-1)}{(4\pi)^{k_1+k_2+2\nu-1}}\sum_{\mu=0}^\nu (-m)^{\mu}\genfrac{(}{)}{0pt}{}{\nu}{\mu}\frac{\Gamma(k_1+\nu)\Gamma(k_2+\nu)}{\Gamma(k_1+\nu-\mu)\Gamma(k_2+\mu)}\\&\qquad \times\sum_{n\in\mathbb Z+\delta_1} n^{\nu-\mu}\frac{a(m+n)\overline{b(n)}}{(m+n)^{k_1+k_2+2\nu-1}}. 
\end{align*}
\end{prop}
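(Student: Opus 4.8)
The plan is to recognize $[g,P_{m,k_2,v_2}]_\nu$ itself as a Poincar\'e series of weight $K:=k_1+k_2+2\nu$ and multiplier system $v=v_1v_2$, whose seed is the explicit, rapidly decaying holomorphic function $\psi:=[g,\phi]_\nu$ with $\phi(\tau)=e^{2\pi im\tau/w}$; once this is done the computation collapses to the same folding--unfolding used for the ordinary Poincar\'e series. Writing $P_{m,k_2,v_2}=\frac{1}{2}\sum_{\gamma\in\Gamma_\infty\backslash\Gamma}v_2(\gamma)^{-1}\phi|_{k_2}\gamma$, I would first note that since $k_2>2$ the series and all its $\tau$-derivatives converge absolutely and locally uniformly, so the finite-order differential operator defining the bracket may be applied term by term:
$$[g,P_{m,k_2,v_2}]_\nu=\frac{1}{2}\sum_{\gamma\in\Gamma_\infty\backslash\Gamma}v_2(\gamma)^{-1}\,[g,\phi|_{k_2}\gamma]_\nu.$$
Then I would invoke the covariance of the Rankin--Cohen bracket under the slash action, $[g|_{k_1}\gamma,\phi|_{k_2}\gamma]_\nu=[g,\phi]_\nu|_K\gamma$ (the identity underlying Theorem 7.1 of \cite{C}), together with $g|_{k_1}\gamma=v_1(\gamma)g$, to obtain $[g,\phi|_{k_2}\gamma]_\nu=v_1(\gamma)^{-1}[g,\phi]_\nu|_K\gamma$ and hence
$$[g,P_{m,k_2,v_2}]_\nu=\frac{1}{2}\sum_{\gamma\in\Gamma_\infty\backslash\Gamma}v(\gamma)^{-1}\,\psi|_K\gamma,\qquad \psi=[g,\phi]_\nu.$$

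Next I would check that this is a legitimate weight-$K$ Poincar\'e series. Since $g(\tau+w)=e^{2\pi i\delta_1}g(\tau)$ and $\phi(\tau+w)=e^{2\pi i\delta_2}\phi(\tau)$, the bracket satisfies $\psi|_KT^w=e^{2\pi i(\delta_1+\delta_2)}\psi=v(T^w)\psi$, so the seed is $\Gamma_\infty$-equivariant with the correct multiplier and the series is independent of coset representatives; moreover $\psi$ is a sum of exponentials $e^{2\pi i(n+m)\tau/w}$ with $n\ge 0$ and $m>0$, hence decays at $\infty$, and $K>2$, so convergence holds. With these facts the standard folding--unfolding trick (exactly as for the ordinary Poincar\'e series quoted in the preliminaries) gives
$$\langle f,[g,P_{m,k_2,v_2}]_\nu\rangle=\frac{1}{[\Gamma(1):\Gamma]}\int_0^\infty\!\!\int_0^w f(\tau)\overline{\psi(\tau)}\,y^K\,\frac{dx\,dy}{y^2}.$$

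It then remains to evaluate the strip integral. Expanding $\psi=[g,\phi]_\nu$ term by term, each product $g^{(i)}\phi^{(\nu-i)}$ contributes $e^{2\pi i(n+m)\tau/w}$ with coefficient proportional to $b(n)\,n^i m^{\nu-i}$, and collecting the constants from the definition of the bracket yields an explicit Fourier expansion of $\psi$. The $x$-integral over $[0,w]$ forces the frequency match $N=n+m$ between $f=\sum_Na(N)e^{2\pi iN\tau/w}$ and $\overline\psi$, and the remaining $y$-integral is the Gamma integral $\int_0^\infty y^{K-2}e^{-4\pi(n+m)y/w}\,dy=\Gamma(K-1)\big(\frac{w}{4\pi(n+m)}\big)^{K-1}$. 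Assembling the powers of $w$ (namely $w\cdot w^{-\nu}\cdot w^{K-1}=w^{k_1+k_2+\nu}$) and reindexing the bracket sum by $\mu=\nu-i$ reproduces the stated formula exactly.

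The main obstacle is the very first step, because $P_{m,k_2,v_2}^{(j)}$ is not modular---differentiating the automorphy factor produces spurious $c/(c\tau+d)$ terms---so one cannot unfold the individual derivatives appearing in the bracket. The point is that the full Rankin--Cohen bracket is engineered precisely to cancel these terms, and its slash-covariance lets one repackage $[g,P_{m,k_2,v_2}]_\nu$ as a single weight-$K$ Poincar\'e series with the explicit holomorphic seed $[g,\phi]_\nu$; after that the problem reduces to the $\nu=0$ unfolding and becomes routine. A secondary point requiring care is justifying the term-by-term differentiation and the interchange of the finite bracket operator with the infinite sum, which is guaranteed by the local uniform convergence coming from $k_2>2$.
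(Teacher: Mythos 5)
Your proposal is correct and, at its core, travels the same road as the paper --- both identify $[g,P_{m,k_2,v_2}]_\nu$ as a Poincar\'e-type series of weight $K=k_1+k_2+2\nu$ and multiplier $v=v_1v_2$ whose seed is $[g,e^{2\pi i m\tau/w}]_\nu$, and then reduce to the standard unfolding computation --- but the way you reach that identification is genuinely different. The paper gets there by hand: it expands the bracket, differentiates $(c\tau+d)^{-k_2}e^{2\pi i m\gamma\tau/w}$ term by term, reshuffles binomial coefficients, and invokes Zagier's equation (76) of \cite{Z} (the transformation law expressing $g^{(\nu-\mu)}(\gamma\tau)$ through $g^{(r)}(\tau)$, $c$, and $c\tau+d$) to recombine everything into $\sum_\gamma\bigl(e^{2\pi i m\tau/w}g^{(\nu-\mu)}(\tau)\bigr)\big|_{K,v}\gamma$. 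You instead push the finite-order bracket through the locally uniformly convergent $\gamma$-sum and apply the slash-covariance $[g|_{k_1}\gamma,\phi|_{k_2}\gamma]_\nu=[g,\phi]_\nu|_K\gamma$ in one stroke; that covariance is exactly the identity the paper's computation re-proves in the special case where one argument is an exponential, and your route is essentially that of Williams \cite{W}, which the paper cites but does not use. The endgames differ only cosmetically: the paper expands the seed into its Fourier modes, writes the bracket as
\begin{equation*}
w^{-\nu}\sum_{\mu=0}^\nu (-m)^{\mu}\binom{\nu}{\mu}\frac{\Gamma(k_1+\nu)\Gamma(k_2+\nu)}{\Gamma(k_2+\mu)\Gamma(k_1+\nu-\mu)}\sum_{n\in\mathbb Z+\delta_1} n^{\nu-\mu}b(n)P_{n+m,K,v},
\end{equation*}
and quotes the Petersson coefficient formula from the preliminaries, while you unfold directly against the seed; since that coefficient formula is itself proved by unfolding, these are the same computation, and your bookkeeping (the reindexing $\mu=\nu-i$ and $w\cdot w^{-\nu}\cdot w^{K-1}=w^{k_1+k_2+\nu}$) reproduces the stated constants. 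One caveat, which applies equally to the paper: the interchange of $\langle f,\cdot\rangle$ with the infinite sum (over $\gamma$ in your version, over $n$ in the paper's) is asserted rather than justified; a crude dominated-convergence estimate using $|g^{(i)}(\tau)|\ll y^{-k_1-i}$ near the real line requires roughly $k_2>k_1+2$, which holds in the paper's application ($k_1=\tfrac12$, $k_2=k\geq 4$) but not for all $k_1,k_2$ allowed by the statement --- a shared informality, not a gap of yours relative to the paper.
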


With the preceding proposition, we can prove an identity of the bracket $[\theta(\tau),P_{k,m}(4\tau)]$, generalizing Proposition of \cite{K}.

\begin{prop}
Let $\nu$ be a non-negative integer and $k\geq 4$ be an even integer. Let \[g=\sum_{n=1}^\infty c(n)q^n\in S_{k+2\nu+1/2}^+(\Gamma_0(4))\] and $P_{k,m}$ be the Poincar\'e series of weight $k$, index $m$ and level $1$. Then
\begin{align*}&\left\langle g,[\theta,P_{k,m}(4\tau)]_\nu\right\rangle\\=&\frac{\Gamma(k+2\nu-1/2)}{2^{2k+4\nu+1}\pi^{k+2\nu-1/2}}\sum_{\mu=0}^\nu (-4m)^{\mu}\genfrac{(}{)}{0pt}{}{\nu}{\mu}\frac{\Gamma(1/2+\nu)\Gamma(k+\nu)}{\Gamma(1/2+\nu-\mu)\Gamma(k+\mu)}\sum_{n\in \mathbb Z} \frac{n^{2\nu-2\mu}c(4m+n^2)}{(4m+n^2)^{k+2\nu-1/2}}.
\end{align*}
\end{prop}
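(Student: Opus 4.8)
The plan is to read off the formula from Proposition 2.2. In the notation of that proposition I would take $\Gamma=\Gamma_0(4)$ (so $w=1$ and $[\Gamma(1):\Gamma]=6$), the modular form to be $\theta$ (so $k_1=\tfrac12$, $v_1$ the theta-multiplier and $\delta_1=0$), the weight $k_2=k$ with trivial multiplier $v_2$ (so $\delta_2=0$), and the cusp form to be our $g\in S_{k+2\nu+1/2}(\Gamma_0(4),v_1v_2)$. The only input from $\theta$ is its Fourier expansion $\theta=\sum_{j\in\mathbb Z}q^{j^2}$, i.e. $b(n)=\#\{j\in\mathbb Z:j^2=n\}$; feeding this into the inner sum $\sum_n n^{\nu-\mu}a(m'+n)\overline{b(n)}/(m'+n)^{k+2\nu-1/2}$ of Proposition 2.2 (with $a(\cdot)=c(\cdot)$ the coefficients of $g$) collapses it, upon writing $n=j^2$, to $\sum_{j\in\mathbb Z}j^{2\nu-2\mu}c(m'+j^2)/(m'+j^2)^{k+2\nu-1/2}$. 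With the index $m'=4m$ this is exactly the inner sum in the assertion, and the factors $(-4m)^\mu\binom{\nu}{\mu}$ and the $\Gamma$-quotient match on the nose.

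The real obstacle is that $P_{k,m}(4\tau)$ is not the Poincar\'e series to which Proposition 2.2 literally applies. Writing $V=\mathrm{diag}(4,1)$ one has $P_{k,m}(4\tau)=4^{-k/2}P_{k,m}|_kV$, a weight-$k$ form on $\Gamma_0(4)$ with leading term $q^{4m}$ whose expansion is supported on $4\mid n$; it is therefore \emph{not} the native weight-$k$, index-$4m$ Poincar\'e series $P^{(4)}_{k,4m}$ for $\Gamma_0(4)$ with trivial multiplier (the defining sum of $P_{k,m}(4\tau)$ runs over $\Gamma_\infty\backslash\SL$, which is six times larger than $\Gamma_\infty\backslash\Gamma_0(4)$). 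The key step I would establish is that, paired against the \emph{plus}-space form $g$, the two series agree up to an explicit constant:
\[\langle g,[\theta,P_{k,m}(4\tau)]_\nu\rangle=\tfrac32\,\langle g,[\theta,P^{(4)}_{k,4m}]_\nu\rangle.\]
To prove this I would decompose the sum over $\Gamma_\infty\backslash\SL$ into right $\Gamma^0(4)$-orbits, where $\Gamma^0(4)=V\Gamma_0(4)V^{-1}$, these orbits corresponding to the cusps of $\Gamma_0(4)$; only the orbit carrying the seed of index $4m$ reproduces the native series, and I would invoke the plus condition on $g$ (the $U_4$--$W_4$ relation) to kill the contributions of the remaining cusps, the residual volume bookkeeping producing the universal constant $\tfrac32=\tfrac64$. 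This is precisely the $\nu>0$ analogue of the mechanism behind the Proposition of \cite{K}, and it is where essentially all the work lies; note that the constant cannot be unconditional, since $P_{k,m}(4\tau)$ and $P^{(4)}_{k,4m}$ are genuinely different functions, so the plus condition is indispensable.

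Granting this reduction, the conclusion is pure bookkeeping. Applying Proposition 2.2 to $\langle g,[\theta,P^{(4)}_{k,4m}]_\nu\rangle$ with the data above yields the prefactor $\tfrac16\,\Gamma(k+2\nu-\tfrac12)/(4\pi)^{k+2\nu-1/2}$ together with the sum over $\mu$ already identified. Multiplying by $\tfrac32$ and using $(4\pi)^{k+2\nu-1/2}=2^{2k+4\nu-1}\pi^{k+2\nu-1/2}$ gives
\[\tfrac32\cdot\tfrac16\cdot(4\pi)^{-(k+2\nu-1/2)}=\frac{1}{2^{2k+4\nu+1}\pi^{k+2\nu-1/2}},\]
which is exactly the prefactor in the statement; all remaining factors coincide, finishing the proof.
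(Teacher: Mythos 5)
Your reduction target is correct, and so is your endgame: with $P^{(4)}_{k,4m}$ the native index-$4m$ Poincar\'e series for $\Gamma_0(4)$ (the paper's $P_{k,4m,4}$), the identity $\langle g,[\theta,P_{k,m}(4\tau)]_\nu\rangle=\tfrac32\langle g,[\theta,P^{(4)}_{k,4m}]_\nu\rangle$ does hold on the plus space, and feeding $b(n)=\#\{j\in\mathbb Z\colon j^2=n\}$ into Proposition 2.2 together with your power-of-two bookkeeping then yields the stated formula. The genuine gap is your proposed proof of that key identity, and it is not a repairable slip of wording: the mechanism you describe is self-contradictory. Decompose $\Gamma_\infty\backslash\SL$ into its three orbits under right multiplication by $\Gamma^0(4)$ (the double cosets $\Gamma_\infty\backslash\SL/\Gamma^0(4)$, one per cusp). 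The identity orbit is $(\Gamma_\infty\cap\Gamma^0(4))\backslash\Gamma^0(4)=\langle\pm T^4\rangle\backslash\Gamma^0(4)$, which conjugation by $V$ carries bijectively onto $\langle\pm T\rangle\backslash\Gamma_0(4)$, with seed $4^{-k/2}\,e^{2\pi i m\tau}|_kV=e^{2\pi i(4m)\tau}$; so this orbit contributes \emph{exactly} $P^{(4)}_{k,4m}$, with coefficient $1$. Hence, if the plus condition really ``killed'' the contributions of the remaining cusps, your constant would be $1$, not $\tfrac32$; no residual volume bookkeeping can manufacture the missing $\tfrac12$. What actually happens is that the pieces attached to the cusps $0$ and $1/2$ pair \emph{nontrivially} with $g$: their total equals $\tfrac12\langle g,[\theta,P^{(4)}_{k,4m}]_\nu\rangle$, and evaluating them is where all the work sits. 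One needs the Fourier expansions of $g$ and $\theta$ at those cusps, and the plus condition enters not to annihilate these terms but to express those expansions back in terms of the coefficients $c(n)$ via the $U_4$--$W_4$ eigenrelation.

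That missing computation is precisely the body of the paper's proof, organized in a dual way: the paper first proves the Fricke-involution lemma $\langle f,[\theta,h(4\tau)]_\nu\rangle=2^{-2k-2\nu}\langle f|U_4,[\theta,h]_\nu\rangle$ for any level-one $h$ (Lemmas 3.1--3.2 and Proposition 3.3; this is where the plus condition is consumed), and then writes $P_{k,m}=\sum_{\gamma\in\Gamma_0(4)\backslash\Gamma(1)}P_{k,m,4}|_k\gamma$ over six explicit cosets, computing the expansions of $f|U_4\gamma^{-1}$ and $\theta|_{1/2}\gamma^{-1}$ case by case and applying Proposition 2.2 to each. All three cases (cusps $\infty$, $1/2$, $0$) give nonvanishing contributions --- the first two produce the even-$n$ and odd-$n$ halves of the sum, and the third produces the full sum with twice their combined weight --- and only their total yields the constant $2^{-(2k+4\nu+1)}$. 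Until you carry out the analogous computation at the cusps $0$ and $1/2$ (or import the paper's Proposition 3.3 to replace your reduction step), your key lemma remains an unproved assertion whose suggested justification would force the wrong constant.
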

 \medskip

Our main theorem connects the Rankin-Cohen brackets $[\theta(\tau),f_j(4\tau)]_\nu$ with the basis $\{g_\mu\}$. Note that we have employed the same normalizations for the Petersson inner product regardless whether the modular forms have integral or half-integral weights, while they are different in \cite{K}. Therefore, our formula differs from that of \cite{K} by a scalar of $6=[\Gamma(1):\Gamma_0(4)]$ in the case $\nu=0$.

\begin{thm} 
For each $m, \nu\in \mathbb{N}$, 
  $$ \sum_{j=1}^d\frac{a_{j}(m)}{m^{k-1}\langle f_{j}, f_{j}\rangle }
 [ \theta(\tau),f_{j}(4 \tau)]_{\nu}=
\sum_{\mu=1}^e \frac{\ell_{\nu}(g_\mu,m)}{\langle g_\mu, g_\mu\rangle} g_{\mu}(\tau)$$
where for $g=\sum_{n=1}^\infty c(n)q^n\in S_{k+2\nu+1/2}^+(\Gamma_0(4))$ 
\begin{align*}
 \ell_{\nu}(g,m)&=\frac{\Gamma(k+2\nu-\frac{1}{2})}{2^2  (4\pi)^{2\nu+1/2}\Gamma(k-1)} \\
&\qquad\times\sum_{\mu=0}^{\nu} (-4m)^\mu\sm \nu\\ \mu\esm 
\frac{\Gamma(1/2+\nu)\Gamma(k+\nu)}{\Gamma(1/2+\nu-\mu)\Gamma(k+\mu)} \sum_{n\in \mathbb{Z}}
\frac{n^{2\nu-2\mu}c(4m+n^2)}{(4m+n^2)^{k+2\nu-\frac{1}{2}}}.    
\end{align*}
\end{thm}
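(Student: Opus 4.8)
The plan is to recognize that the left-hand side is, up to one universal constant, the \emph{single} Rankin--Cohen bracket $[\theta(\tau),P_{k,m}(4\tau)]_\nu$, and then to expand that bracket in the orthogonal basis $\{g_\mu\}$ using Proposition~3. The whole argument is thus a translation between the eigenbasis expansion of the integral-weight Poincar\'e series and the half-integral-weight inner-product formula already established.

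First I would expand $P_{k,m}$ in the Hecke eigenbasis $\{f_j\}$ of $S_k$. For $\Gamma=\Gamma(1)$ one has $[\Gamma(1):\Gamma]=1$ and $w=1$, so the Petersson formula quoted above gives $\langle f_j,P_{k,m}\rangle=\Gamma(k-1)(4\pi m)^{-(k-1)}a_j(m)$, which is real since $f_j$ has real Fourier coefficients. Orthogonality of the eigenbasis then yields
$$\sum_{j=1}^d\frac{a_j(m)}{m^{k-1}\langle f_j,f_j\rangle}f_j=\frac{(4\pi)^{k-1}}{\Gamma(k-1)}P_{k,m}.$$
Because the Rankin--Cohen bracket is bilinear and $f\mapsto f(4\tau)$ is linear, applying $[\theta,(\,\cdot\,)(4\tau)]_\nu$ to both sides shows that the left-hand side of the theorem equals $\frac{(4\pi)^{k-1}}{\Gamma(k-1)}[\theta(\tau),P_{k,m}(4\tau)]_\nu$.

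Next I would note that this form lies in the plus space $S_{k+2\nu+1/2}^+$: each $[\theta,f_j(4\tau)]_\nu$ lies there, being exactly the form to which $\mathcal{S}_1$ is applied in Proposition~1, and hence so does any linear combination. Since $\{g_\mu\}$ is an orthogonal basis of $S_{k+2\nu+1/2}^+$, I may write $[\theta,P_{k,m}(4\tau)]_\nu=\sum_\mu\frac{\langle[\theta,P_{k,m}(4\tau)]_\nu,g_\mu\rangle}{\langle g_\mu,g_\mu\rangle}g_\mu$. The inner product $\langle g_\mu,[\theta,P_{k,m}(4\tau)]_\nu\rangle$ is computed by Proposition~3 and is real because $g_\mu$ has real Fourier coefficients, so it equals $\langle[\theta,P_{k,m}(4\tau)]_\nu,g_\mu\rangle$. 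A direct comparison of the prefactor in Proposition~3 with the one in the definition of $\ell_\nu$ gives the clean relation $\ell_\nu(g,m)=\frac{(4\pi)^{k-1}}{\Gamma(k-1)}\langle g,[\theta,P_{k,m}(4\tau)]_\nu\rangle$. Substituting this into the basis expansion produces precisely $\frac{(4\pi)^{k-1}}{\Gamma(k-1)}[\theta,P_{k,m}(4\tau)]_\nu=\sum_\mu\frac{\ell_\nu(g_\mu,m)}{\langle g_\mu,g_\mu\rangle}g_\mu$, which matches the left-hand side and finishes the proof.

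The routine but genuinely error-prone part will be the bookkeeping of constants: one must verify that the ratio of the Proposition~3 prefactor to that of $\ell_\nu$ collapses to exactly $(4\pi)^{k-1}/\Gamma(k-1)$, so that the powers of $2$, the powers of $\pi$, and the $\Gamma$-factors all cancel, and one must keep the conjugation convention of the Petersson product straight so that the reality of every inner product in sight can be invoked. The one conceptual point that cannot be skipped is the plus-space membership of $[\theta,P_{k,m}(4\tau)]_\nu$, since the expansion in $\{g_\mu\}$ is valid only for forms in $S_{k+2\nu+1/2}^+$; I expect this, rather than any of the computations, to be the step most in need of explicit justification.
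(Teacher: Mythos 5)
Your proof is correct and takes essentially the same route as the paper, whose entire proof is the one line ``apply the relation $P_{k,m}=\frac{\Gamma(k-1)}{(4\pi m)^{k-1}}\sum_{j}\frac{a_j(m)}{\langle f_j,f_j\rangle}f_j$ to Proposition 2.3''; you have simply made explicit the steps it suppresses (bilinearity of the bracket, plus-space membership, reality of the coefficients, and the constant check). Your claimed ratio is indeed exact: $\frac{\Gamma(k+2\nu-1/2)}{2^{2k+4\nu+1}\pi^{k+2\nu-1/2}}\cdot\frac{(4\pi)^{k-1}}{\Gamma(k-1)}=\frac{\Gamma(k+2\nu-1/2)}{2^{2}(4\pi)^{2\nu+1/2}\Gamma(k-1)}$, so $\ell_\nu(g,m)=\frac{(4\pi)^{k-1}}{\Gamma(k-1)}\left\langle g,[\theta,P_{k,m}(4\tau)]_\nu\right\rangle$ as you asserted.
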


As a corollary, we apply the first Shimura lift and relate the brackets $[f_j,f_j]_\nu$ to the basis $\{F_\mu\}$.

\begin{cor}
For each $m, \nu\in \mathbb{N}$,
$$ \frac{\Gamma(k+\nu)}{\Gamma(k+2\nu)}\sum_{j=1}^d\frac{a_{j}(m)}{m^{k-1}\langle f_{j}, f_{j}\rangle }[f_j,f_{j}]_{2\nu}=
\sum_{\mu=1}^e \frac{\ell_{\nu}(g_\mu,m)}{\langle g_\mu, g_\mu\rangle} c_\mu(1)F_{\mu}$$
\end{cor}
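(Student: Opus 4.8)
The plan is to apply the first Shimura map $\mathcal{S}_1$ to both sides of the main theorem and read off the two ingredients separately. First I would note that $\mathcal{S}_1\colon S^+_{k+2\nu+\frac12}\to S_{2k+4\nu}$ is $\mathbb{C}$-linear, and that both sides of the theorem lie in $S^+_{k+2\nu+\frac12}$: the right-hand side manifestly, as a linear combination of the $g_\mu$, and the left-hand side by the asserted equality of the theorem. Hence $\mathcal{S}_1$ may be applied term by term to each side.

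On the right-hand side, linearity together with the normalization $\mathcal{S}_1(g_\mu)=c_\mu(1)F_\mu$ recorded in Section 2 immediately produces $\sum_{\mu=1}^e\frac{\ell_\nu(g_\mu,m)}{\langle g_\mu,g_\mu\rangle}c_\mu(1)F_\mu$, which is precisely the right-hand side of the corollary.

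On the left-hand side, I would invoke the identity $\frac{(k+2\nu-1)!}{(k+\nu-1)!}\mathcal{S}_1\big([\theta(\tau),f(4\tau)]_\nu\big)=[f,f]_{2\nu}$ of the first Proposition, applied to each eigenform $f=f_j$. Solving for $\mathcal{S}_1\big([\theta(\tau),f_j(4\tau)]_\nu\big)$ yields the scalar $\frac{(k+\nu-1)!}{(k+2\nu-1)!}=\frac{\Gamma(k+\nu)}{\Gamma(k+2\nu)}$ in front of $[f_j,f_j]_{2\nu}$. Since this scalar is independent of $j$, it factors out of the sum over $j$, giving exactly $\frac{\Gamma(k+\nu)}{\Gamma(k+2\nu)}\sum_{j=1}^d\frac{a_j(m)}{m^{k-1}\langle f_j,f_j\rangle}[f_j,f_j]_{2\nu}$.

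Equating the two transformed sides then gives the corollary verbatim. The only points requiring care are bookkeeping ones: verifying that $[\theta(\tau),f_j(4\tau)]_\nu$ has weight $k+2\nu+\frac12$ so that the source space of $\mathcal{S}_1$ matches, and correctly converting the factorials $(k+2\nu-1)!$ and $(k+\nu-1)!$ into the Gamma values $\Gamma(k+2\nu)$ and $\Gamma(k+\nu)$. There is no genuine analytic obstacle here; the entire content has already been supplied by the main theorem and the first Proposition, and this corollary is a formal consequence of applying a single linear map.
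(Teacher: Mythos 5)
Your proposal is correct and matches the paper's own proof: the paper likewise obtains the corollary by applying the first Shimura map $\mathcal{S}_1$ to both sides of Theorem 2.4, using Proposition 2.1 to convert $\mathcal{S}_1\big([\theta(\tau),f_j(4\tau)]_\nu\big)$ into $\frac{\Gamma(k+\nu)}{\Gamma(k+2\nu)}[f_j,f_j]_{2\nu}$ and the normalization $\mathcal{S}_1(g_\mu)=c_\mu(1)F_\mu$ on the other side. The bookkeeping points you flag (weight matching and the factorial-to-Gamma conversion) are exactly the only details involved, and you handle them correctly.
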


These results are the analogues 
for greater $\nu$ of \cite{K} in the case $\nu=0$ and similar corollaries (special values, 
algebraicity etc.) as in \cite{K} follow easily. Note that there are a few misprints on the $2$-powers in \cite{K}: in the expression of $\textrm{tr } G_2$ on Page 295, the $2$-power should be $2^{-k+2}$, and hence the rational constant $\kappa_k$ should take the form
\[\kappa_k=\frac{3}{2}\cdot \frac{1\cdot 3\cdot 5\cdot \ldots \cdot (2k-3)}{2^{k}(k-2)!}.\]

\section{Proofs}

\subsection{Proof of Proposition 2.1} 
We verify the details for completeness, following the proof in \cite{P}.

Assume $f=\sum_{n=1}^\infty a(n)q^n\in S_k$, so $k\geq 12$ is even. Note first that 
\begin{align*}
&[\theta(\tau),f(4\tau)]_{\nu}\\
=&\sum_{\mu=0}^\nu(-1)^{\nu-\mu}\genfrac{(}{)}{0pt}{}{\nu}{\mu}\frac{\Gamma(1/2+\nu)\Gamma(k+\nu)}{\Gamma(1/2+\mu)\Gamma(k+\nu-\mu)}\left(\sum_{r\in\mathbb Z}r^{2\mu}q^{r^2}\right)\left(\sum_{n=1}^\infty a(n)(4n)^{2(\nu-\mu)}q^{4n}\right)\\ 
=&\sum_{\mu=0}^\nu(-1)^{\nu-\mu}\genfrac{(}{)}{0pt}{}{\nu}{\mu}\frac{\Gamma(1/2+\nu)\Gamma(k+\nu)}{\Gamma(1/2+\mu)\Gamma(k+\nu-\mu)}\sum_{n=1}^\infty\left(\sum_{r\in\mathbb Z}r^{2\mu}(n-r^2)^{\nu-\mu}a\left(\frac{n-r^2}{4}\right)\right)q^n,  
\end{align*}
where we understand that $a(x)=0$ if $x\not\in\mathbb N$. It follows by the definitio of $\mathcal{S}_1$ that the $n$-th Fourier coefficient of $\mathcal{S}_1\big([\theta(\tau),f(4\tau) ]_{\nu}\big)$ is given by
\begin{align*}
&\sum_{d\mid n}d^{k+2\nu-1}\sum_{\mu=0}^\nu(-1)^{\nu-\mu}\genfrac{(}{)}{0pt}{}{\nu}{\mu}\frac{\Gamma(1/2+\nu)\Gamma(k+\nu)}{\Gamma(1/2+\mu)\Gamma(k+\nu-\mu)}\sum_{r\in\mathbb Z}r^{2\mu}(n^2/d^2-r^2)^{\nu-\mu}a\left(\frac{n^2-d^2r^2}{4d^2}\right).
\end{align*}
By well-known formulas of $\Gamma$-function, $n$-th Fourier coefficient of $\frac{1}{(k+\nu-1)!}\mathcal{S}_1\big([\theta(\tau),f(4\tau) ]_{\nu}\big)$ is equal to
\begin{align*}
&\sum_{d\mid n}d^{k+2\nu-1}\sum_{\mu=0}^\nu\frac{(-1)^{\nu-\mu}(2\nu)!}{(2\mu)!(\nu-\mu)!(k+\nu-\mu-1)!}\sum_{r\in\mathbb Z}r^{2\mu}\left(\frac{n^2-d^2r^2}{4d^2}\right)^{\nu-\mu}a\left(\frac{n^2-d^2r^2}{4d^2}\right).
\end{align*}
Now the $x^{2\nu}$-coefficient of the following polynomial
\[\left(\frac{n^2-r^2d^2}{4d^2}x^2+rx-1\right)^{k+2\nu-1}=\left(\frac{n+rd}{2d}x-1\right)^{k+2\nu-1}\left(\frac{n-rd}{2d}x+1\right)^{k+2\nu-1}\]
is given by 
\begin{align}
&\sum_{\mu=0}^\nu\genfrac{(}{)}{0pt}{}{k+2\nu-1}{\nu-\mu}\genfrac{(}{)}{0pt}{}{k+\nu+\mu-1}{2\mu}r^{2\mu}(-1)^{\nu-\mu+1}\left(\frac{n^2-r^2d^2}{4d^2}\right)^{\nu-\mu}\nonumber\\
=&\sum_{\mu=0}^\nu\frac{(k+2\nu-1)!}{(2\mu)!(\nu-\mu)!(k+\nu-\mu-1)!}r^{2\mu}(-1)^{\nu-\mu+1}\left(\frac{n^2-r^2d^2}{4d^2}\right)^{\nu-\mu}
\end{align}
and also by
\begin{align}
&\sum_{\mu=0}^{2\nu}\genfrac{(}{)}{0pt}{}{k+2\nu-1}{\mu}\genfrac{(}{)}{0pt}{}{k+2\nu-1}{2\nu-\mu}(-1)^{\mu+1}\left(\frac{n+rd}{2d}\right)^{\mu}\left(\frac{n-rd}{2d}\right)^{2\nu-\mu}.
\end{align}
By (2.1) and (2.2), the $n$-th Fourier coefficient of $\frac{1}{(k+\nu-1)!}\mathcal{S}_1\big([\theta(\tau),f(4\tau) ]_{\nu}\big)$ is equal to
\begin{align*}
&\frac{-(2\nu)!}{(k+2\nu-1)!}\sum_{d\mid n}d^{k+2\nu-1}\sum_{\mu=0}^{2\nu}\genfrac{(}{)}{0pt}{}{k+2\nu-1}{\mu}\genfrac{(}{)}{0pt}{}{k+2\nu-1}{2\nu-\mu}\\&\qquad\times\sum_{r\in\mathbb Z}(-1)^{\mu+1}\left(\frac{n+rd}{2d}\right)^{\mu}\left(\frac{n-rd}{2d}\right)^{2\nu-\mu}a\left(\frac{n^2-d^2r^2}{4d^2}\right).
\end{align*}
Now by the change of variables $n_1=\frac{n+dr}{2}$ and $n_2=\frac{n-dr}{2}$, this is equal to
\begin{align*}
&\frac{-(2\nu)!}{(k+2\nu-1)!}\sum_{n_1+n_2=n}\sum_{d\mid (n_1,n_2)}d^{k-1}\sum_{\mu=0}^{2\nu}\genfrac{(}{)}{0pt}{}{k+2\nu-1}{\mu}\genfrac{(}{)}{0pt}{}{k+2\nu-1}{2\nu-\mu}\\&\qquad\times(-1)^{\mu+1}n_1^{\mu}n_2^{2\nu-\mu}a\left(\frac{n_1n_2}{d^2}\right),
\end{align*}
since $\frac{n}{d}\equiv r\mod 2$ and hence $d\mid n$ if and only if $d\mid n_1$ and $d\mid n_2$. Recall the Hecke relation of $a(n)$:
\[a(n)a(m)=\sum_{d\mid(m,n)}d^{k-1}a(\frac{mn}{d^2}),\]
so the $n$-th Fourier coefficient of the left-hand side is equal to
\begin{align*}
&\sum_{n_1+n_2=n}\sum_{\mu=0}^{2\nu}(2\nu)!\genfrac{(}{)}{0pt}{}{k+2\nu-1}{\mu}\genfrac{(}{)}{0pt}{}{k+2\nu-1}{2\nu-\mu}(-1)^{\mu}n_1^{\mu}n_2^{2\nu-\mu}a(n_1)a(n_2).
\end{align*}

On the other hand, the right-hand side of the desired formula is equal to 
\begin{align*}
&[f(\tau), f(\tau)]_{2\nu}\\
=&\sum_{\mu=0}^{2\nu}(-1)^{2\nu-\mu}\genfrac{(}{)}{0pt}{}{2\nu}{\mu}\frac{\Gamma(k+2\nu)\Gamma(k+2\nu)}{\Gamma(k+\mu)\Gamma(k+2\nu-\mu)}\left(\sum_{n=1}^\infty a(n)n^{\mu}q^{n}\right)\left(\sum_{n=1}^\infty a(n)n^{2\nu-\mu}q^{n}\right)\\ 
=&\sum_{\mu=0}^{2\nu}(-1)^{\mu}\genfrac{(}{)}{0pt}{}{2\nu}{\mu}\frac{\Gamma(k+2\nu)\Gamma(k+2\nu)}{\Gamma(k+\mu)\Gamma(k+2\nu-\mu)}\sum_{n=1}^\infty \left(\sum_{n_1+n_2=n}a(n_1)a(n_2)n_1^\mu n_2^{2\nu-\mu}\right)q^n, 
\end{align*}
so the $n$-th Fourier coefficient of the right-hand side is given by
\[\sum_{\mu=0}^{2\nu}(-1)^{\mu}(2\nu)!\genfrac{(}{)}{0pt}{}{k+2\nu-1}{\mu}\genfrac{(}{)}{0pt}{}{k+2\nu-1}{2\nu-\mu} \sum_{n_1+n_2=n}a(n_1)a(n_2)n_1^\mu n_2^{2\nu-\mu}.\]
And we complete the proof.
  
\subsection{Proof of Proposition 2.2}

We extend Zagier's formula to Poincar\'e series and relax his assumption on the multiplier systems. 

\begin{proof}
By definition,
\begin{align*}
[g,P_{m,k_2,v_2}]_\nu&=(2\pi i)^{-\nu}\sum_{\mu=0}^\nu(-1)^{\nu-\mu}\genfrac{(}{)}{0pt}{}{\nu}{\mu}\frac{\Gamma(k_1+\nu)\Gamma(k_2+\nu)}{\Gamma(k_1+\mu)\Gamma(k_2+\nu-\mu)}g^{(\mu)}(\tau) P_{m,k_2,\nu_2}^{(\nu-\mu)}(\tau)\\
&=(2\pi i)^{-\nu}\sum_{\mu=0}^\nu(-1)^{\nu-\mu}\genfrac{(}{)}{0pt}{}{\nu}{\mu}\frac{\Gamma(k_1+\nu)\Gamma(k_2+\nu)}{\Gamma(k_1+\mu)\Gamma(k_2+\nu-\mu)}g^{(\mu)}(\tau) \\
&\qquad \times\sum_{\gamma\in\Gamma_\infty\backslash\Gamma}v_2(\gamma)^{-1}\left((c\tau+d)^{-k_2}e^{2\pi im\gamma\tau/w}\right)^{(\nu-\mu)}.
\end{align*}
By induction, we see that
\begin{align*}
&\left((c\tau+d)^{-k_2}e^{2\pi im\gamma\tau/w}\right)^{(\nu-\mu)}\\=&\sum_{r=0}^{\nu-\mu}\genfrac{(}{)}{0pt}{}{\nu-\mu}{r}\frac{\Gamma(k_2+\nu-\mu)}{\Gamma(k_2+r)}(-c)^{\nu-\mu-r}(2\pi imw^{-1})^r(c\tau+d)^{-k_2-(\nu-\mu)-r}e^{2\pi im\gamma\tau/w}.    
\end{align*}
Putting together, we obtain
\begin{align*}
&[g,P_{m,k_2,v_2}]_\nu\\
=&(2\pi i)^{-\nu}\sum_{\mu=0}^\nu(-1)^{\nu-\mu}\genfrac{(}{)}{0pt}{}{\nu}{\mu}\frac{\Gamma(k_1+\nu)\Gamma(k_2+\nu)}{\Gamma(k_1+\mu)\Gamma(k_2+\nu-\mu)}g^{(\mu)} (\tau)\sum_{\gamma\in\Gamma_\infty\backslash\Gamma}v_2(\gamma)^{-1}\\
&\qquad \times\sum_{r=0}^{\nu-\mu}\genfrac{(}{)}{0pt}{}{\nu-\mu}{r}\frac{\Gamma(k_2+\nu-\mu)}{\Gamma(k_2+r)}(-c)^{\nu-\mu-r}(2\pi imw^{-1})^r(c\tau+d)^{-k_2-(\nu-\mu)-r}e^{2\pi im\gamma\tau/w}.   
\end{align*}
Interchanging $r$ and $\mu$, this equals
\begin{align*}
&(2\pi i)^{-\nu}\sum_{\gamma\in\Gamma_\infty\backslash\Gamma}v_2(\gamma)^{-1}\sum_{\mu=0}^\nu\sum_{r=0}^{\nu-\mu}(-1)^{\nu-r}\genfrac{(}{)}{0pt}{}{\nu}{r}\frac{\Gamma(k_1+\nu)\Gamma(k_2+\nu)}{\Gamma(k_1+r)\Gamma(k_2+\nu-r)}g^{(r)}(\tau) \\
&\qquad \times\genfrac{(}{)}{0pt}{}{\nu-r}{\mu}\frac{\Gamma(k_2+\nu-r)}{\Gamma(k_2+\mu)}(-c)^{\nu-\mu-r}(2\pi imw^{-1})^\mu(c\tau+d)^{-k_2-(\nu-r)-\mu}e^{2\pi im\gamma\tau/w}\\
=&(2\pi i)^{-\nu}\sum_{\gamma\in\Gamma_\infty\backslash\Gamma}v(\gamma)^{-1}\sum_{\mu=0}^\nu(2\pi imw^{-1})^\mu(-1)^{\mu}\genfrac{(}{)}{0pt}{}{\nu}{\mu}\frac{\Gamma(k_1+\nu)\Gamma(k_2+\nu)}{\Gamma(k_2+\mu)\Gamma(k_1+\nu-\mu)}(c\tau+d)^{-k_1-k_2-2\nu}\\
&\qquad \times e^{2\pi im\gamma\tau/w}\sum_{r=0}^{\nu-\mu} v_1(\gamma)g^{(r)}(\tau) \genfrac{(}{)}{0pt}{}{\nu-\mu}{r}\frac{\Gamma(k_1+\nu-\mu)}{\Gamma(k_1+r)}c^{\nu-\mu-r}(c\tau+d)^{k_1+\nu-\mu+r},   
\end{align*}
where we used the identity 
\[\genfrac{(}{)}{0pt}{}{\nu}{\mu}\genfrac{(}{)}{0pt}{}{\nu-\mu}{r}=\genfrac{(}{)}{0pt}{}{\nu}{r}\genfrac{(}{)}{0pt}{}{\nu-r}{\mu}.\]
By Zagier's equation (76), this equals
\begin{align*}
&(2\pi i)^{-\nu}\sum_{\mu=0}^\nu(2\pi imw^{-1})^\mu(-1)^{\mu}\genfrac{(}{)}{0pt}{}{\nu}{\mu}\frac{\Gamma(k_1+\nu)\Gamma(k_2+\nu)}{\Gamma(k_2+\mu)\Gamma(k_1+\nu-\mu)}\\
&\qquad \times \sum_{\gamma\in\Gamma_\infty\backslash\Gamma}v(\gamma)^{-1}(c\tau+d)^{-k_1-k_2-2\nu}e^{2\pi im\gamma\tau/w}g^{(\nu-\mu)}(\gamma \tau)\\
=& (2\pi i)^{-\nu}\sum_{\mu=0}^\nu(2\pi imw^{-1})^\mu(-1)^{\mu}\genfrac{(}{)}{0pt}{}{\nu}{\mu}\frac{\Gamma(k_1+\nu)\Gamma(k_2+\nu)}{\Gamma(k_2+\mu)\Gamma(k_1+\nu-\mu)}\\
&\qquad\times\sum_{\gamma\in\Gamma_\infty\backslash\Gamma}\left.\left(e^{2\pi im\tau/w}g^{(\nu-\mu)}(\tau)\right)\right|_{k_1+k_2+2\nu,v}\gamma(\tau).
\end{align*}
Now $g^{(\nu-\mu)}(\tau)=(2\pi iw^{-1})^{\nu-\mu}\sum_{n\in\mathbb Z+\delta_1}n^{\nu-\mu}b(n)e^{2\pi in\tau/w}$, so
\begin{align*}
&(2\pi i)^{-\nu}\sum_{\mu=0}^\nu(2\pi imw^{-1})^\mu(-1)^{\mu}\genfrac{(}{)}{0pt}{}{\nu}{\mu}\frac{\Gamma(k_1+\nu)\Gamma(k_2+\nu)}{\Gamma(k_2+\mu)\Gamma(k_1+\nu-\mu)}e^{2\pi im\tau/w}g^{(\nu-\mu)}(\tau)\\
=& (2\pi i)^{-\nu}\sum_{\mu=0}^\nu(2\pi imw^{-1})^\mu(-1)^{\mu}\genfrac{(}{)}{0pt}{}{\nu}{\mu}\frac{\Gamma(k_1+\nu)\Gamma(k_2+\nu)}{\Gamma(k_2+\mu)\Gamma(k_1+\nu-\mu)}\\
&\qquad \times e^{2\pi im\tau/w}(2\pi iw^{-1})^{\nu-\mu}\sum_{n\in\mathbb Z+\delta_1} n^{\nu-\mu}b(n)e^{2\pi in\tau/w}\\
=& w^{-\nu}\sum_{\mu=0}^\nu (-m)^{\mu}\genfrac{(}{)}{0pt}{}{\nu}{\mu}\frac{\Gamma(k_1+\nu)\Gamma(k_2+\nu)}{\Gamma(k_2+\mu)\Gamma(k_1+\nu-\mu)}\sum_{n\in\mathbb Z+\delta_1} n^{\nu-\mu}b(n)e^{2\pi i(n+m)\tau/w}.
\end{align*}
It follows that
\begin{align*}
&[g,P_{m,k_2,v_2}]_\nu\\
=& w^{-\nu}\sum_{\mu=0}^\nu (-m)^{\mu}\genfrac{(}{)}{0pt}{}{\nu}{\mu}\frac{\Gamma(k_1+\nu)\Gamma(k_2+\nu)}{\Gamma(k_2+\mu)\Gamma(k_1+\nu-\mu)}\sum_{n\in\mathbb Z+\delta_1} n^{\nu-\mu}b(n)P_{n+m,k_1+k_2+2\nu,v}(\tau),
\end{align*}
which implies that
\begin{align*}
\left\langle f,[g,P_{m,k_2,v_2}]_\nu\right\rangle=&\frac{w^{k_1+k_2+\nu}}{[\Gamma(1):\Gamma]}\sum_{\mu=0}^\nu (-m)^{\mu}\genfrac{(}{)}{0pt}{}{\nu}{\mu}\frac{\Gamma(k_1+\nu)\Gamma(k_2+\nu)}{\Gamma(k_2+\mu)\Gamma(k_1+\nu-\mu)}\\
&\qquad\times\sum_{n\in\mathbb Z+\delta_1} n^{\nu-\mu}\overline{b(n)}\frac{\Gamma(k_1+k_2+2\nu-1)}{(4\pi(m+n))^{k+2\nu-1}}a(m+n).    
\end{align*}
This is the desired formula.
\end{proof}

\subsection{Proof of Proposition 2.3}

In general, Rankin-Cohen brackets commute with slash-$k$ operators by Cohen.

\begin{lem}[Cohen 1975]
Let $g,h$ be modular forms of level $4$ with weight $k_1\in \mathbb Z$ and $k_2\in\frac{1}{2}+\mathbb Z$ respectively. Then
\[[g|_{k_1}W_4,h|_{k_2}W_4]_\nu=i^{-k_1-2\nu}[g,h]_\nu|_{k_1+k_2+2\nu}W_4,\]
where 
\begin{align*}
g|_{k_1}W_4&=\det(W_4)^{\frac{k_1}{2}}j(W_4,\tau)^{-k_1}g(W_4\tau)\\
h|_{k_2}W_4&=(-2i\tau)^{-k_2}h(W_4\tau)
\end{align*}
\end{lem}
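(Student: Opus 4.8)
The plan is to reduce the statement to the $\mathrm{GL}_2^+(\mathbb R)$-covariance of the Rankin--Cohen bracket under the determinant-normalized slash operator of Section 2, and then to account for the discrepancy between that operator and the half-integral weight $W_4$-operator $(-2i\tau)^{-\kappa}h(W_4\tau)$ by a scalar power of $i$. Throughout, for $M=W_4$ we have $\det(M)=4>0$ and $j(W_4,\tau)=4\tau$, which stays off the branch cut, so every fractional power below is unambiguous with the principal branch.

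First I would record the fundamental covariance: writing $f|_\kappa^{\mathrm{std}}M=\det(M)^{\kappa/2}j(M,\tau)^{-\kappa}f(M\tau)$ for the slash operator of Section 2, one has, for $M\in\mathrm{GL}_2^+(\mathbb R)$ and all real weights $k_1,k_2$,
\[[\,g|_{k_1}^{\mathrm{std}}M,\ h|_{k_2}^{\mathrm{std}}M\,]_\nu=[g,h]_\nu\big|_{k_1+k_2+2\nu}^{\mathrm{std}}M.\]
This is essentially the content of Cohen's construction (Theorem 7.1 of \cite{C}); the only point beyond the $\mathrm{SL}_2$-case is the balancing of determinant factors, which I would verify by first proving the un-normalized identity $[\,g\,\|_{k_1}M,\ h\,\|_{k_2}M\,]_\nu=\det(M)^\nu[g,h]_\nu\,\|_{k_1+k_2+2\nu}M$ (an easy induction on $\nu$ using $\tfrac{d}{d\tau}(M\tau)=\det(M)(c\tau+d)^{-2}$, where $g\,\|_kM=j(M,\tau)^{-k}g(M\tau)$ omits the determinant factor) and then absorbing $\det(M)^{(k_1+k_2)/2}\det(M)^\nu=\det(M)^{(k_1+k_2+2\nu)/2}$ into the normalization.

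Next I would compare the two slash conventions on half-integral weight. For any $\kappa\in\tfrac12+\mathbb Z$, using $\det(W_4)^{\kappa/2}=2^\kappa$, $j(W_4,\tau)^{-\kappa}=(4\tau)^{-\kappa}=4^{-\kappa}\tau^{-\kappa}$, and the fact that $\arg(-2i\tau)=\arg\tau-\tfrac\pi2\in(-\tfrac\pi2,\tfrac\pi2)$ so that the principal logarithm is additive and $(-2i\tau)^{-\kappa}=(-2i)^{-\kappa}\tau^{-\kappa}$, a short computation gives
\[h|_\kappa W_4=(-2i\tau)^{-\kappa}h(W_4\tau)=i^{\kappa}\,h|_\kappa^{\mathrm{std}}W_4,\qquad i^\kappa:=e^{i\pi\kappa/2}.\]
Since $k_1$ is an integer, the $W_4$-operator on $g$ is literally the standard slash, so no such factor appears for $g$.

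Finally I would assemble the scalars. By bilinearity of the bracket and the two observations above, with $K=k_1+k_2+2\nu$,
\begin{align*}
[\,g|_{k_1}W_4,\ h|_{k_2}W_4\,]_\nu&=i^{k_2}[\,g|_{k_1}^{\mathrm{std}}W_4,\ h|_{k_2}^{\mathrm{std}}W_4\,]_\nu=i^{k_2}[g,h]_\nu\big|_{K}^{\mathrm{std}}W_4\\
&=i^{k_2}\,i^{-K}\,[g,h]_\nu\big|_{K}W_4,
\end{align*}
where the last equality applies the half-integral comparison for the weight $K$. Since $k_2-K=-k_1-2\nu$, this is exactly the asserted identity. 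The only genuinely delicate point is the bookkeeping of the $i$-powers, i.e.\ checking that the principal-branch relation $h|_\kappa W_4=i^\kappa h|_\kappa^{\mathrm{std}}W_4$ holds uniformly in $\kappa$ (here for both $\kappa=k_2$ and $\kappa=K$) and that $i^{k_2}i^{-K}=i^{k_2-K}$; everything else is the already-established covariance and elementary bilinearity.
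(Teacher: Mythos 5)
Your proof is correct and is essentially the paper's own argument: both reduce the identity to Cohen's covariance of the Rankin--Cohen bracket under real matrices and then track the powers of $i$ coming from comparing $(-2i\tau)^{-\kappa}$ with the principal-branch slash, arriving at the same factors $i^{k_2}\cdot i^{-(k_1+k_2+2\nu)}=i^{-k_1-2\nu}$. The only cosmetic difference is that the paper rescales $W_4$ to the determinant-one matrix $\alpha=\left(\begin{smallmatrix}0&-1/2\\2&0\end{smallmatrix}\right)$ and cites Cohen for $\mathrm{SL}_2(\mathbb R)$, whereas you keep $W_4$ and use the determinant-normalized $\mathrm{GL}_2^+(\mathbb R)$ slash --- but these coincide, since the normalized slash is invariant under positive scalar rescaling of the matrix.
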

\begin{proof}
Cohen showed that the bracket commutes with slash-$\alpha$ for any $\alpha\in\textrm{SL}_2(\mathbb R)$.
Now in our setting, let $\alpha=\begin{pmatrix}0&-1/2\\2&0\end{pmatrix}$ and we see easily
\begin{align*}
g|_{k_1}W_4&=g|_{k_1}\alpha\\
h|_{k_2}W_4&=i^{k_2}h|_{k_2}\alpha\\
[g,h]_\nu|_{k_1+k_2+2\nu}W_4&=i^{k_1+k_2+2\nu}[g,h]_\nu|_{k_1+k_2+2\nu}\alpha.
\end{align*}
It follows that
\begin{align*}
[g|_{k_1}W_4,h|_{k_2}W_4]_\nu&=i^{k_2}[g|_{k_1}\alpha,h|_{k_2}\alpha]_\nu\\
&=i^{k_2}[g,h]_\nu|_{k_1+k_2+2\nu}\alpha\\
&=i^{k_2}i^{-k_1-k_2-2\nu}[g,h]_\nu|_{k_1+k_2+2\nu}W_4=i^{-k_1-2\nu}[g,h]_\nu|_{k_1+k_2+2\nu}W_4.
\end{align*}
This is the formula we wanted.
\end{proof}

\begin{lem}
Let $g\in M_{k_1}(\Gamma(1))$, $h\in M_{k_2}^+(\Gamma_0(4))$ with $k_1\in \mathbb Z$ and $k_2\in\frac{1}{2}+\mathbb Z$, and $f\in S_{k}^+(\Gamma_0(4))$ with $k=k_1+k_2+2\nu$. Then
\[\langle f,[h|U_4,g(4\tau)]_\nu\rangle =\genfrac{(}{)}{0.5pt}{}{2}{2k_2}2^{-k_1+k_2-1/2}i^{k_1+2\nu}\langle f|_{k_1+k_2+2\nu}W_4,[h,g]_\nu\rangle.\]
\end{lem}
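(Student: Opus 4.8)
The plan is to convert both entries of the bracket $[h|U_4,g(4\tau)]_\nu$ into slashes by the Fricke involution $W_4$, so that Lemma~3.3 (Cohen's commutation of the bracket with $W_4$) applies directly, and then to transport $W_4$ across the Petersson product onto $f$. For the entry $g(4\tau)$ I would exploit that $g$ has level one: writing $W_4=SV_4$ with $V_4=\left(\begin{smallmatrix}4&0\\0&1\end{smallmatrix}\right)$ and using $g|_{k_1}S=g$ gives
\[
g|_{k_1}W_4=(g|_{k_1}S)|_{k_1}V_4=g|_{k_1}V_4=2^{k_1}g(4\tau),
\]
so that $g(4\tau)=2^{-k_1}g|_{k_1}W_4$. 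For the entry $h|U_4$ I would invoke the plus-space characterization recalled in Section~2, namely that $h\in M_{k_2}^+(\Gamma_0(4))$ forces $h|U_4$ to be an explicit scalar multiple of $h|_{k_2}W_4$; this is the only place the plus-space hypothesis on $h$ is used.

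With both substitutions, bilinearity of the bracket reduces the left-hand side to a scalar times $[h|_{k_2}W_4,\,g|_{k_1}W_4]_\nu$. Using the symmetry $[a,b]_\nu=(-1)^\nu[b,a]_\nu$ to match the ordering in Lemma~3.3 and then applying that lemma yields
\[
[h|U_4,g(4\tau)]_\nu=C\,[h,g]_\nu\big|_{k_1+k_2+2\nu}W_4
\]
for an explicit constant $C$ assembled from $2^{-k_1}$, the plus-space scalar for $h|U_4$, and the factor $i^{-k_1-2\nu}$ supplied by Lemma~3.3. Finally I would pair with $f$ and move $W_4$ onto the first argument: in the normalization $f|_kW_4=(-2i\tau)^{-k}f(-1/4\tau)$, which is chosen so that $W_4$ fixes $\theta$, the operator $W_4$ is an isometric involution for the normalized Petersson product, so $\langle f,\,X|_kW_4\rangle=\langle f|_kW_4,\,X\rangle$ up to the root of unity given by $W_4^2$. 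Because the product is conjugate-linear in the second slot, the scalar $C$ gets replaced by $\overline{C}$; in particular $i^{-k_1-2\nu}$ becomes $i^{k_1+2\nu}$, and one arrives at $\langle f|_kW_4,[h,g]_\nu\rangle$, as required.

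The conceptual steps above are immediate; the substantive work, and the point I expect to be the main obstacle, is the bookkeeping of the root-of-unity and $2$-power constants. The half-integral weights of $h$, of $f$, and of $[h,g]_\nu$ mean that the $(-2i\tau)^{-k}$ normalization, the theta multiplier, the precise plus-space scalar, the value of $W_4^2$, and the conjugation coming from $\overline{C}$ all contribute roots of unity and a half-integral power of $2$. Reconciling these is exactly what should produce the Kronecker symbol $\left(\frac{2}{2k_2}\right)$ and the exponent $-k_1+k_2-\tfrac12$ in the stated constant, and I expect verifying that the accumulated sign is precisely $\left(\frac{2}{2k_2}\right)$ to be the most delicate part; every other step is a direct substitution.
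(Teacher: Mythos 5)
Your proposal follows the paper's proof essentially step for step: write $g(4\tau)=2^{-k_1}g|_{k_1}W_4$ via $W_4=SV_4$ and level-one invariance, use the plus-space eigenrelation to replace $h|U_4$ by an explicit multiple of $h|_{k_2}W_4$, apply Cohen's commutation lemma (Lemma 3.1 in the paper's numbering, not 3.3), and move $W_4$ across the Petersson product as a unitary involution, with conjugate-linearity in the second slot turning $i^{-k_1-2\nu}$ into $i^{k_1+2\nu}$ exactly as the paper does implicitly. The one ingredient you defer as bookkeeping --- that the $U_4W_4$-eigenvalue on $M_{k_2}^+$ is precisely $\left(\frac{2}{2k_2}\right)2^{k_2-1/2}$, which is the sole source of the Kronecker symbol and the $2$-power --- is likewise quoted without proof in the paper, so your outline matches the paper's argument to the same level of detail.
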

\begin{proof}
Since $g$ is of level one and $W_4=SV_4$, we see that
\[g|_{k_1}W_4=g|_{k_1}SV_4=g|_{k_1}V_4=2^{k_1}g(4\tau).\]
On the other hand,
$h$ belongs to the plus space, so $h$ is an eigenform for $U_4W_4$ with eigenvalue $\genfrac{(}{)}{0.5pt}{}{2}{2k_2}2^{k_2-1/2}$, where
\[h|U_4=\frac{1}{4}\sum_{j\mod 4}h(\frac{\tau+j}{4}).\]
Then we have
\[[h|_{k_2}W_4,g|_{k_1}W_4]_\nu=\genfrac{(}{)}{0.5pt}{}{2}{2k_2}2^{k_1-k_2+1/2}[h|U_4(\tau),g(4\tau)]_\nu.\]
Therefore, by the preceding lemma, we have
\begin{align*}
\langle f,[h|U_4,g(4\tau))]_\nu\rangle &=\genfrac{(}{)}{0.5pt}{}{2}{2k_2}2^{-k_1+k_2-1/2}\langle f,[h|_{k_2}W_4,g|_{k_1}W_4]_\nu\rangle \\
&=\genfrac{(}{)}{0.5pt}{}{2}{2k_2}2^{-k_1+k_2-1/2}i^{k_1+2\nu}\langle f,[h,g]_\nu|_{k_1+k_2+2\nu}W_4\rangle \\
&=\genfrac{(}{)}{0.5pt}{}{2}{2k_2}2^{-k_1+k_2-1/2}i^{k_1+2\nu}\langle f|_{k_1+k_2+2\nu}W_4,[h,g]_\nu\rangle,
\end{align*}
noting that $W_4$ is a unitary involution.
\end{proof}

\begin{prop}
Let $g\in M_{k_1}(\Gamma(1))$ and $f\in S_{k_1+1/2+2\nu}^+(\Gamma_0(4))$. Then
\[\langle f,[\theta,g(4\tau))]_\nu\rangle =2^{-2k_1-2\nu}\langle f|U_4,[\theta,g]_\nu\rangle.\]
\end{prop}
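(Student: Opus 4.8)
The plan is to read this proposition off the preceding lemma by specializing $h=\theta$, which has weight $k_2=\tfrac12$ and lies in the plus space $M^+_{1/2}(\Gamma_0(4))$. The one fact about $\theta$ that makes the specialization collapse to the stated shape is that $\theta$ is fixed by $U_4$: since $\theta=\sum_{n\in\mathbb Z}q^{n^2}$ is supported on perfect squares, its image $\theta|U_4=\sum_n c(4n)q^n$ retains exactly the coefficients at $4n=(2m)^2$, so $\theta|U_4=\theta$. Hence $h|U_4=\theta$, and the left-hand side of the preceding lemma is precisely $\langle f,[\theta,g(4\tau)]_\nu\rangle$.

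Next I would substitute $k_2=\tfrac12$ into the constant furnished by the preceding lemma. Here $\genfrac{(}{)}{0.5pt}{}{2}{2k_2}=\genfrac{(}{)}{0.5pt}{}{2}{1}=1$ and $2^{-k_1+k_2-1/2}=2^{-k_1}$, so the lemma reduces to
\[\langle f,[\theta,g(4\tau)]_\nu\rangle = 2^{-k_1}\,i^{k_1+2\nu}\,\langle f|_{k}W_4,[\theta,g]_\nu\rangle,\qquad k=k_1+\tfrac12+2\nu.\]
The only remaining task is to turn $f|_kW_4$ back into $f|U_4$. Since $f\in S^+_k$, the plus-space relation from Section 2, together with the fact (already used in the preceding lemma) that $W_4$ is a unitary involution, expresses $f|_kW_4$ as an explicit scalar multiple of $f|U_4$; because the inner product is linear in its first argument, that scalar pulls straight out in front, and combining it with $2^{-k_1}i^{k_1+2\nu}$ produces the asserted constant $2^{-2k_1-2\nu}$.

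The crux is therefore the bookkeeping of that final scalar. Writing $k-\tfrac12=k_1+2\nu$, the plus-space relation contributes a power of two $2^{-(k-1/2)}=2^{-(k_1+2\nu)}$, which multiplies the lemma's $2^{-k_1}$ to give exactly $2^{-2k_1-2\nu}$; its root-of-unity factor must be the conjugate of the phase $i^{k_1+2\nu}$ produced by the lemma, so that the two cancel and leave a positive real constant. A convenient consistency check on the normalization is $\theta$ itself: at $k=\tfrac12$ (that is $k_1=\nu=0$) one has $\theta|U_4=\theta=\theta|_{1/2}W_4$, forcing the $U_4W_4$-eigenvalue there to equal $1$. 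I expect the main obstacle to be pinning down this eigenvalue of $U_4W_4$ on $S^+_k$ at weight $k=k_1+\tfrac12+2\nu$ — both its exact power of $2$ and its phase — and confirming that the phase conjugates the lemma's $i^{k_1+2\nu}$ rather than reinforcing it; once that is settled, the identity follows by direct substitution.
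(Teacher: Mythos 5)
Your proposal is correct and is essentially the paper's own proof: specialize the preceding lemma at $h=\theta$, $k_2=\tfrac12$ (so $\theta|U_4=\theta$, the Kronecker symbol is $1$, and the constant is $2^{-k_1}i^{k_1+2\nu}$), then convert $f|_kW_4$ into $f|U_4$ via the plus-space $U_4W_4$-eigenvalue. The scalar you leave to be ``settled'' is exactly the eigenvalue already quoted in the proof of that lemma, $\genfrac{(}{)}{0.5pt}{}{2}{2k_2}\,2^{k_2-1/2}$, which at weight $k_1+\tfrac12+2\nu$ gives $f|_kW_4=(-1)^{k_1/2+\nu}\,2^{-k_1-2\nu}\,f|U_4$; since $k_1$ is even, $(-1)^{k_1/2+\nu}=i^{k_1+2\nu}$ is real and cancels the lemma's phase, and the powers of two combine to $2^{-2k_1-2\nu}$, exactly as you predicted (your $\theta$ consistency check correctly pins down the power $2^{k_2-1/2}$, rather than the $2^{k_2}$ misprinted in the plus-space relation of Section~2).
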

\begin{proof}
In this case, $k_2=1/2$ and $\theta|U_4=\theta$. Moreover,
\[f|_{k_1+1/2+2\nu}W_4=(-1)^{\frac{k_1}{2}+\nu}2^{-k_1-2\nu}f|U_4.\]
The formula in the preceding lemma simplifies to the one in the statement.
\end{proof}

\medskip
Finally we can give a proof of Proposition 2.3.
\medskip

\noindent{\it Proof of Proposition 2.3}. By the Proposition 3.3 with $g=P_{k,m}$, we have
\[\left\langle f,[\theta,P_{k,m}(4\tau)]_\nu\right\rangle=2^{-2k-2\nu}\left\langle f|U_4,[\theta,P_{k,m}]_\nu\right\rangle.\]
Since $P_{k,m}$ is of level one, in order to apply Zagier's formula, we replace it with
\[P_{k,m}=P_{k,m,4}|\textrm{tr}=\sum_{\gamma\in\Gamma_0(4)\backslash\Gamma(1)}P_{k,m,4}|_k\gamma.\]
We then choose the representatives for $\gamma$ (see page 290 of \cite{K})
\[\begin{pmatrix}
1&0\\4&1    
\end{pmatrix},\quad \begin{pmatrix}
1&0\\2&1    
\end{pmatrix},\quad \begin{pmatrix}
1&0\\1&1    
\end{pmatrix}\begin{pmatrix}
1&j\\0&1    
\end{pmatrix}, j\mod 4.\]
For each $\gamma$, we have
\begin{align*}
\left\langle f|U_4,[\theta,P_{k,m,4}|_{k}\gamma]_\nu\right\rangle&=\left\langle f|U_4,[\theta|_{1/2}\gamma^{-1},P_{k,m,4}]_\nu|_{k+2\nu+1/2}\gamma\right\rangle\\
&=\left\langle f|_{k+2\nu+1/2}U_4\gamma^{-1},[\theta|_{1/2}\gamma^{-1},P_{k,m,4}]_\nu\right\rangle.
\end{align*}
We need to compute the Fourier expansion of 
\[f|_{k+2\nu+1/2}U_4\gamma^{-1},\quad \text{ and } \theta|_{1/2}\gamma^{-1}=\theta|_{1/2}U_4\gamma^{-1}\]
for each $\gamma$ and apply the corresponding Zagier's formula. We compute the first and then specializes to the second.

\noindent\textbf{Case 1:} $\gamma=\begin{pmatrix}
1&0\\4&1    
\end{pmatrix}$ and $\gamma^{-1}=\begin{pmatrix}
1&0\\-4&1    
\end{pmatrix}\in\Gamma_0(4)$. Since $f|U_4$ is modular of level four, $f|_{k+2\nu+1/2}U_4\gamma^{-1}=f|U_4$ and hence $\theta|_{1/2}U_4\gamma^{-1}=\theta$. Therefore, their Fourier expansions are
\begin{align*}
f|_{k+2\nu+1/2}U_4\gamma^{-1}&=\sum_{n=1}^\infty c(4n)q^n\\
\theta|_{1/2}U_4\gamma^{-1}&=\sum_{n\in\mathbb Z}q^{n^2}.
\end{align*}
Therefore, the contribution of this case is given by 
\begin{align*}
&2^{-2k-2\nu}\left\langle f|_{k+2\nu+1/2}U_4\gamma^{-1},[\theta|_{1/2}\gamma^{-1},P_{k,m,4}]_\nu\right\rangle\\=&\frac{\Gamma(k+2\nu-1/2)}{6\cdot 2^{4k+6\nu-1}\pi^{k+2\nu-1/2}}\sum_{\mu=0}^\nu (-m)^{\mu}\genfrac{(}{)}{0pt}{}{\nu}{\mu}\frac{\Gamma(1/2+\nu)\Gamma(k+\nu)}{\Gamma(1/2+\nu-\mu)\Gamma(k+\mu)}\sum_{n\in 2\mathbb Z} \frac{(n/2)^{2\nu-2\mu}c(4m+n^2)}{(m+n^2/4)^{k+2\nu-1/2}}\\=&\frac{\Gamma(k+2\nu-1/2)}{6\cdot 2^{2k+4\nu}\pi^{k+2\nu-1/2}}\sum_{\mu=0}^\nu (-4m)^{\mu}\genfrac{(}{)}{0pt}{}{\nu}{\mu}\frac{\Gamma(1/2+\nu)\Gamma(k+\nu)}{\Gamma(1/2+\nu-\mu)\Gamma(k+\mu)}\sum_{n\in 2\mathbb Z} \frac{n^{2\nu-2\mu}c(4m+n^2)}{(4m+n^2)^{k+2\nu-1/2}}, 
\end{align*}
since $[\Gamma(1):\Gamma_0(4)]=6$.

\noindent\textbf{Case 2:} $\gamma=\begin{pmatrix}
1&0\\2&1    
\end{pmatrix}$ and $\gamma^{-1}=\begin{pmatrix}
1&0\\-2&1    
\end{pmatrix}=W_4^{-1}\begin{pmatrix}
1&1/2\\0&1    
\end{pmatrix}W_4$. Since $f$ has weight $k+2\nu+1/2$ and lies in the plus space,
\[f|U_4W_4^{-1}=(-1)^{k/2+\nu}2^{k+2\nu}f.\]
Therefore,
\begin{align*}
f|_{k+2\nu+1/2}U_4\gamma^{-1}&=(-1)^{k/2+\nu}2^{k+2\nu}f|_{k+2\nu+1/2}\begin{pmatrix}
1&1/2\\0&1    
\end{pmatrix}W_4.
\end{align*}
Now \[f(\tau+1/2)=(-1)^{k/2+\nu}2^{k+2\nu+1}(f|_{k+2\nu+1/2}W_4)(4\tau)-f(\tau)=(-1)^{k/2+\nu}2^{1/2}f|_{k+2\nu+1/2}W_4V_4-f\] so
\begin{align*}
f|_{k+2\nu+1/2}U_4\gamma^{-1}&=2^{k+2\nu+1/2}f|_{k+2\nu+1/2}W_4V_4W_4-(-1)^{k/2+\nu}2^{k+2\nu}f|W_4\\
&=f(\tau/4)-f|U_4(\tau)=\sum_{n=1}^\infty c(n)q^{\frac{n}{4}}-\sum_{n=1}^\infty c(4n)q^n\\
&=\sum_{n\geq 1, \text{ odd }} c(n)q^{\frac{n}{4}}.
\end{align*}
Specializing to $\theta$, we have
\[\theta|_{1/2}U_4\gamma^{-1}=\sum_{n\in 1+2\mathbb Z}q^{\frac{n^2}{4}}.\]
Note that the multiplier systems for $f|_{k+2\nu+1/2}U_4\gamma^{-1}$ and $\theta|_{1/2}U_4\gamma^{-1}$ are the same, both being given by $v^{\gamma^{-1}}(\alpha)=v(\gamma^{-1}\alpha\gamma)$ with $v$ that of $\theta$. In particular, $v^{\gamma^{-1}}(T)=i$ and by Proposition 1.1 the contribution of this case is given by:
\begin{align*}&2^{-2k-2\nu}\left\langle f|_{k+2\nu+1/2}U_4\gamma^{-1},[\theta|_{1/2}\gamma^{-1},P_{k,m,4}]_\nu\right\rangle\\=&\frac{\Gamma(k+2\nu-1/2)}{6\cdot 2^{4k+6\nu-1}\pi^{k+2\nu-1/2}}\sum_{\mu=0}^\nu (-m)^{\mu}\genfrac{(}{)}{0pt}{}{\nu}{\mu}\frac{\Gamma(1/2+\nu)\Gamma(k+\nu)}{\Gamma(1/2+\nu-\mu)\Gamma(k+\mu)}\sum_{n\in 1+2\mathbb Z} \frac{(n/2)^{2\nu-2\mu}c(4m+n^2)}{(m+\frac{n^2}{4})^{k+2\nu-1/2}}\\=&\frac{\Gamma(k+2\nu-1/2)}{6\cdot 2^{2k+4\nu}\pi^{k+2\nu-1/2}}\sum_{\mu=0}^\nu (-4m)^{\mu}\genfrac{(}{)}{0pt}{}{\nu}{\mu}\frac{\Gamma(1/2+\nu)\Gamma(k+\nu)}{\Gamma(1/2+\nu-\mu)\Gamma(k+\mu)}\sum_{n\in 1+2\mathbb Z} \frac{n^{2\nu-2\mu}c(4m+n^2)}{(4m+n^2)^{k+2\nu-1/2}}.
\end{align*}

\noindent\textbf{Case 3:} $\gamma_j=\begin{pmatrix}
1&0\\1&1    
\end{pmatrix}\begin{pmatrix}
1&j\\0&1    
\end{pmatrix}$, $j\mod 4$, so $\gamma_j^{-1}=\begin{pmatrix}
1&-j\\0&1    
\end{pmatrix}\begin{pmatrix}
1&0\\-1&1    
\end{pmatrix}$. But $f|U_4$ is modular for $\Gamma_0(4)$, so different $\gamma_j$'s give the same Fourier expansion:
\[f|_{k+2\nu+1/2}U_4\gamma_j^{-1}=f|_{k+2\nu+1/2}U_4\gamma_0^{-1}=f|_{k+2\nu+1/2}U_4\begin{pmatrix}
1&0\\-1&1    
\end{pmatrix}.\]
Following Winfried, consider $\Gamma_0^0(4)$ and $f|_{k+2\nu+1/2}U_4\gamma_0^{-1}$ is a modular form for $\Gamma_0^0(4)$ whose multiplier system on $T^4$ has value $1$. A complete set of coset representatives for $\langle T^4\rangle\backslash \Gamma_0^0(4)$ is also a complete set of coset representatives for $\langle T\rangle\backslash \Gamma_0(4)$, so it is trivial that $P_{k,m,4}=P_{k,4m,\Gamma_0^0(4)}$, where the right-hand side is the $4m$-th Poincar\'e series for $\Gamma_0^0(4)$ of weight $k$. 

Now we compute the Fourier coefficients of $f|_{k+2\nu+1/2}U_4\gamma_0^{-1}$. Note that in the metaplectic double cover $\textrm{Mp}_2(\mathbb Z)$ of $\textrm{SL}_2(\mathbb Z)$ we have $\gamma_0^{-1}=T^{-1}S^{-1}T^{-1}$, so
\begin{align*}
f|_{k+2\nu+1/2}U_4\gamma_0^{-1}&=f|_{k+2\nu+1/2}U_4S^{-1}T^{-1}=(2i)^{k+2\nu}f|_{k+2\nu+1/2}W_4S^{-1}T^{-1}\\
&=(2i)^{k+2\nu}(-2i\tau)^{-(k+2\nu+1/2)}f(W_4\tau)|_{k+2\nu+1/2}S^{-1}T^{-1}\\
&=(2i)^{k+2\nu}(2i/\tau)^{-(k+2\nu+1/2)}(-\tau)^{-(k+2\nu+1/2)}f(\tau/4)|_{k+2\nu+1/2}T^{-1}\\
&=2^{-\frac{1}{2}}i^{k+2\nu+1/2}f(\frac{\tau-1}{4}).
\end{align*}
Plugging in the Fourier expansion of $f$, we see that
\begin{align*}
f|_{k+2\nu+1/2}U_4\gamma_0^{-1}&=2^{-\frac{1}{2}}i^{k+2\nu+1/2}\sum_{n=1}^\infty c(n)e^{-2\pi in/4}q^{\frac{n}{4}},
\end{align*}
and specializing to $f=\theta$ we obtain that
\begin{align*}
\theta|_{k+2\nu+1/2}U_4\gamma_0^{-1}&=2^{-\frac{1}{2}}i^{k+2\nu+1/2}\sum_{n\in\mathbb Z} e^{-2\pi in^2/4}q^{\frac{n^2}{4}}.
\end{align*}

Apply Proposition 1.1 to the group $\Gamma_0^0(4)$ and the total contribution in this case is given by
\begin{align*}&4\cdot 2^{-2k-2\nu}\left\langle f|_{k+2\nu+1/2}U_4\gamma_0^{-1},[\theta|_{1/2}\gamma_0^{-1},P_{k,m,4}]_\nu\right\rangle\\=&4\cdot 2^{-2k-2\nu}\left\langle f|_{k+2\nu+1/2}U_4\gamma_0^{-1},[\theta|_{1/2}\gamma_0^{-1},P_{k,4m,\Gamma_0^0(4)}]_\nu\right\rangle\\
=&\frac{4}{6\cdot 4}2^{-2k-2\nu}4^{k+1/2+\nu}\frac{\Gamma(k+2\nu-1/2)}{(4\pi)^{k+2\nu-1/2}}\sum_{\mu=0}^\nu (-4m)^{\mu}\genfrac{(}{)}{0pt}{}{\nu}{\mu}\frac{\Gamma(1/2+\nu)\Gamma(k+\nu)}{\Gamma(1/2+\nu-\mu)\Gamma(k+\mu)}\\&\qquad \times\sum_{n\in\mathbb Z} n^{2\nu-2\mu}2^{-1}\frac{c(4m+n^2)}{(4m+n^2)^{k+2\nu-1/2}}\\
=&\frac{1}{3}\frac{\Gamma(k+2\nu-1/2)}{2^{2k+4\nu}\pi^{k+2\nu-1/2}}\sum_{\mu=0}^\nu (-4m)^{\mu}\genfrac{(}{)}{0pt}{}{\nu}{\mu}\frac{\Gamma(1/2+\nu)\Gamma(k+\nu)}{\Gamma(1/2+\nu-\mu)\Gamma(k+\mu)}\sum_{n\in \mathbb Z} \frac{n^{2\nu-2\mu}c(4m+n^2)}{(4m+n^2)^{k+2\nu-1/2}}.
\end{align*}
Putting together the contributions of all cases, we have the desired formula.

\subsection{Proof of the main theorem and its corollary} Apply the relation
\[P_{k,m}=\frac{\Gamma(k-1)}{(4\pi m)^{k-1}}\sum_{j}\frac{a_j(m)}{\langle f_j,f_j\rangle}f_j\]
to Proposition 2.3 and we have Theorem 2.4. Then apply the first Shimura map and we obtain the identity in Corollary 2.5.



\bibliographystyle{amsplain}

\begin{thebibliography}{10}
\bibitem{C} H. Cohen,  Sums involving the values at negative integers of $L$-functions of quadratic characters, Math. Ann. 217 (1975), 271-285.
\bibitem{K} W. Kohnen,  On squares of Hecke eigenforms, Pure Appl. Math. Q. 19 (2023), no. 1, 281–297.
\bibitem{KZ} W. Kohnen, D. Zagier: Values of $L$-series of modular forms at the center of the critical strip. Invent. Math. 64 (1981), no. 2, 175–198.
\bibitem{P}  A. Popa, Rational decomposition of modular forms, Ramanujan J (2011) 26:419–435
\bibitem{R}  R. A. Rankin, Modular forms and functions, Cambridge University Press (1977).
\bibitem{W} B. Williams, Rankin-Cohen brackets and Serre derivatives and Poincar\'e series, Res. Number Theory (2018) 4: 37.
\bibitem{Z} D. Zagier,  Modular forms whose Fourier coefficients involve zeta-functions of quadratic fields. In: Modular Functions of One Variable, VI (Proc. Second Internat. Conf., Univ. Bonn, Bonn, 1976). Lecture Notes in Math., vol. 627, pp. 175–196. Springer, Berlin (1977).
\end{thebibliography}

\end{document}